\documentclass [12pt]{article}
\usepackage{amsmath,amsthm}
\usepackage{amssymb,latexsym}
\usepackage{graphicx}
\usepackage{setspace}
\usepackage[mathscr]{eucal}
\usepackage{subfig}
\usepackage[pdftex,bookmarks,bookmarksnumbered=true,colorlinks=true,linkcolor=black,citecolor=black]{hyperref}
\usepackage[citation-order]{amsrefs}
\setlength{\textwidth}{6.5in} \setlength{\textheight}{9.2in}
\setlength{\topmargin}{0in}
\setlength{\headheight}{0in}\setlength{\headsep}{0in}
\setlength{\oddsidemargin}{0in}

\numberwithin{equation}{section}
\newtheorem{lemma}{Lemma}
\newtheorem{theorem}{Theorem}

\begin{document}
\title{Periodic and Chaotic Traveling Wave Patterns in Reaction--Diffusion/ Predator--Prey Models \\with General Nonlinearities}
\author{Stefan C. Mancas\footnote{Department of Mathematics, Embry--Riddle Aeronautical University, Daytona Beach, FL 32114--3900, stefan.mancas@erau.edu}\\and\\Roy S. Choudhury\footnote{Department of Mathematics, University of Central Florida, Orlando, FL. 32816--1364, choudhur@longwood.cs.ucf.edu }}
\date{}
\maketitle
\begin{abstract}

Traveling wavetrains in generalized two--species predator--prey models and two--component reaction--diffusion equations are considered.  The stability of the fixed points of the traveling wave ODEs (in the usual "spatial'' variable) is considered. For general functional forms of the nonlinear prey birthrate/prey deathrate or reaction terms, a Hopf bifurcation is shown to occur at two different critical values of the traveling wave speed. The post--bifurcation dynamics is investigated for five different functional forms of the nonlinearities. In cases where the bifurcation is supercritical, the post--bifurcation behaviour yields stable periodic orbits of the traveling--wave ODEs in the spatial variable. These correspond to stable periodic wavetrains of the full PDEs. Subcritical Hopf bifurcations yield more complex post--bifurcation dynamics in the PDE wavetrains. In special cases where the subcritical bifurcation marks the end of the regime of stability, the post--bifurcation behavior in the spatial ODEs is chaotic, corresponding to wavetrains of the original PDEs which are spatially coherent, but have chaotic temporal dynamics. All the models are integrated numerically to investigate the post--bifurcation dynamics and chaotic regimes are characterized by computing power spectra, autocorrelation functions, and fractal dimensions.

\end{abstract}

\section{Introduction}

Morphogenesis or the occurrence  of spatial form and pattern evolving from a spatially homogenous state is a fundamental problem in developmental biology. A seminal contribution to this problem was made by Turing \cite {Turing} who studied reaction--diffusion equations of the form
\begin{align} \label{system}
\frac{\delta N}{\delta t}&=R_1(N,P)+D_1 \frac{\delta^2 N}{\delta x^2}\\
\frac{\delta P}{\delta t}&=R_2(N,P)+D_2 \frac{\delta^2 P}{\delta x^2} \notag
\end{align}
In \cite {Turing}, the reaction functions (or kinematic terms) $R_1$ and $R_2$ were polynomials. However, the fundamental, and somewhat surprising, result that diffusion could destabilize an otherwise stable equilibrium leading to nonuniform spatial patterns  (referred to as prepattern) is not dependent on particular forms of $R_1$ and $R_2$.

The Turing instability in reaction--diffusion  models thus provided a plausible and robust mechanism for the establishment  of spatial prepattern, which could then generate biological patterns for gene activation. Numerous extensions  and applications followed. These include early theoretical and analytical extensions \cites{Turing, Wardlaw, Othmer}. In particular, Segel and Jackson \cite{Segel:1} showed that spatial patterns may occur via Turing instability in macroscopic (extended Lotka--Volterra) models in population biology as well, particularly for species dispersing at different rates. They also provided a lucid physical explanation of how diffusion could indeed generate instability, contrary to its usual interpretation as a smoothing mechanism. Applications in development  biology were stimulated by the work of Meinhards and Gierer \cites{Gierer:1, Gierer:2, Meinhardt}, primarily consisting of numerical simulations of reaction--diffusion systems in various geometries. Analytical work has confirmed and extended the results of \cites{Gierer:1, Gierer:2, Meinhardt}, including bifurcation analysis and investigations of nonstationary (traveling--wave) patterns, spirals, solitary peaks, and fronts \cites{Granero,Keener,Segel:2,Smoller,Rothe}. These are reviewed in \cite{Fife}. Other work has focused  on explaining the properties of spatial patterns \cites{Murray:1,Mimura,Bard,Schaller} on the basis of chemical interactions and geometric considerations. Alternative explanations of pattern--formation, not based on reaction--diffusion equations and the Turing mechanism, have also been investigated \cite{Murray:2}. Recent reviews of these  and other related work on spatial pattern formation are given by Levin and Segel \cite{Levin}, Murray \cite{Murray:3} and Edelstein--Keshet \cite{Keshet}.

In order to incorporate various realistic physical effects which may cause at least one of the physical variables to depend on the past history of the system, it is often necessary to introduce time--delays into the governing equations. Factors that introduce time lags may include age structure of the population (influencing the birth and death rates), maturation periods (thresholds), feeding times and hunger coefficients in predator--prey interactions, reaction times, food storage times, and resource generation times.  Models incorporating time delays in diverse spatially--homogenous biological systems are extensively reviewed by MacDonald \cite{MacDonald}, and in the context of predator--prey models, by Cushing \cite{Cushing}.  These include continuous models such as the Kolmogorov, May, Holling, Hsu, Leslie, and Caperon models, as well as discrete models.

Consider \eqref{system} for the general two--species predator--prey model \cite{Choudhury:1} with
\begin{align} \label{r}
R_1(N,P)&=N F(N)-\alpha NP-\frac{\tilde{\epsilon}N^2}{k}\\
R_2(N,P)&=-PG(P)+\beta NP \notag,
\end{align}
where $N(t)$ and $P(t)$ are the prey and predator populations, respectively, $\tilde{\epsilon}$ is the birth rate of the prey, $k>0$ is the carrying capacity, $\alpha$ is  the rate of predation per predator, and $\beta$ is the rate of the prey's contribution to predator growth. 

In this paper, we initiate a fresh and detailed investigation of traveling spatial wave patterns of \eqref{system}. In particular, we shall investigate in detail wavetrains with periodic and chaotic spatial variation. Toward this end, we consider traveling wave solutions of \eqref{system} in the form of 
\begin{align}\label{ansatz}
N(x,t)&= N(\zeta)\\
P(x,t)&= P(\zeta), \notag
\end{align}
where $\zeta=x-vt$ is the traveling wave, or "spatial", variable, and $v$ is the translation or wave speed, which will act as our bifurcation parameter. Substitution of Eqns. \eqref{r},\eqref{ansatz} in \eqref{system} leads, after some simplification, to the four--mode dynamical system
\begin{align}\label{4D}
\dot{N}&=M\\
\dot{M}&=\frac{1}{D_1}\Big(-vM-NF(N)+\alpha NP+ \frac{\tilde{\epsilon}N^2}{k} \Big)\notag \\
\dot{P}&=Q \notag \\
\dot{Q}&=\frac{1}{D_2}\Big(-vQ+PG(P)-\beta NP)\Big), \notag 
\end{align}
where the overdot denotes $\frac{d}{d \zeta}$. 

Here, however, we will follow [24,28] to consider the stability of the equilibria and the Hopf bifurcations of \eqref{4D} for general functions $F(N)$ and $G(P)$.  This is done in sections \S2 and \S3. In section \S4 we consider \eqref{4D} for specific choices of $F(N)$ and $G(P)$ to determine the regions of phase--space where the system is volume contracting (dissipative), or volume expanding (dilatory). Also, note that the function $F(N)$ incorporates the prey birth rate, and similarly, the function $G(P)$ incorporates the predator death rate  and is chosen so that this rate increases with predator density P. Section \S5 considers the stability of  physically relevant equilibria and Hopf bifurcation points for specific parameter values and choices of $F(N)$, and $G(P)$. Possible chaotic regimes are also delineated there.  The systems are numerically integrated and chaotic regimes are characterized by computing power spectra, correlation function and fractal dimensions \cite{Rothe}. Section \S6 summarizes the results and presents the conclusions.

\section{Linear stability analysis}

The equilibrium, critical or fixed points of the system \eqref{4D} (only nontrivial points are relevant since both the predator or prey population can not be zero) are
\begin{equation}
\label{fixedpoints}
(N_0,M_0,P_0,Q_0)=\Big(\frac{G(P_0)}{\beta}, 0, \frac{F(N_0)-\frac{\tilde{\epsilon}N_0}{k}}{\alpha},0\Big)
\end{equation}
In this section we will consider the Turing bifurcations in general predator--prey systems by considering the system \eqref{4D} for general $F(N)$ and $G(P)$, which incorporate the  prey birth rate and predator death rate. For numerical purposes in, the functions $F(N)$ and $G(P)$ are subsequently chosen to be
\begin{itemize}
\item[A.] $F(N)=\epsilon$, $G(P)=\gamma$, $\tilde{\epsilon}=0$,
\item[B.] $F(N)=\epsilon$, $G(P)=\gamma$, $\tilde{\epsilon}=\epsilon$,
\item[C.] $F(N)=k_0$, $G(P)=\bar{d}+\bar{c} P$, $\tilde{\epsilon}=\epsilon$,
\item[D.] $F(N)=k_0(1+\frac{N}{k})$, $G(P)=d+c P$, $\tilde{\epsilon}=\epsilon$,
\item[E.] $F(N)=\frac{1+\delta N}{1+N^2}$, $G(P)=\gamma(1+k P^2)$, $\tilde{\epsilon}=0$.
\end {itemize}
For the remainder of this paper we shall refer to these cases as System A--E. System B is a modified Lotka--Volterra two species model with diffusion, and $\gamma$ being the death rate of the predator. Notice that qualitative features of such models have been considered earlier, for instance for the Kolmogorov model without delay \cite{Wardlaw} and the May model with delay \cite{Turing}. 

Following standard methods of phase--plane analysis the Jacobian matrix of \eqref{4D} evaluated at the fixed point $(N_0,M_0,P_0,Q_0)$ is
\begin{eqnarray}
J=
\left(\begin{array}{cccc}
0 & 1 & 0& 0\\
\frac{\alpha P_0+\frac{2 \tilde{\epsilon} N_0}{k}-F(N_0)-N_0 F'(N_0)}{D_1} & -\frac{v}{D_1} & \frac{\alpha N_0}{D_1} &0\\
0 & 0 & 0 &1\\
-\frac{\beta P_0}{D_2} & 0 & \frac{-\beta N_0+G(P_0)+P_0G'(P_0)}{D_2} &-\frac{v}{D_2}\\
\end{array}\right).\label{jaco}
\end{eqnarray}
The eigenvalues of this matrix satisfy the characteristic equation
\begin{equation} \label{charac}
g(\lambda)=\lambda^4+b_1\lambda^3+b_2\lambda^2+b_3\lambda +b_4=0
\end{equation}
where
$b_i$ with $i=1,...,4$ are given by
\begin{align}\label{b}
b_1&=\frac{(D_1+D_2)v}{D_1D_2}\\
b_2&=\frac{\alpha(\beta k v^2-\epsilon D_2G_0+kD_2 F'_0G_0)+D_1(\epsilon G_0- \beta k F)G'_0}{\alpha \beta k D_1D_2}\notag \\
b_3&=\frac{v\big( -\epsilon \alpha G_0+\alpha k F'_0 G_0 +(\epsilon G_0-\beta k F)G'_0\big)}{\alpha \beta k D_1D_2}\notag \\
b_4&=\frac{(\beta k F_0-\epsilon G_0)\big( \alpha \beta k +(\epsilon-k F_0)G'_0\big)G_0}{\alpha \beta^2 k^2 D_1D_2} \notag 
\end{align}
where $F_0=F(N_0)$, $G_0=G(P_0)$, $F'_0=\frac{d F(N)}{d N}  \big|N_0$, and $G'_0=\frac{d G(P)}{d P}  \big|P_0$.
The Routh--Hurwitz criteria \cite{Murray:3}, giving the necessary and sufficient conditions $Re(\lambda_i)<0$, $i=1,...,4$ for stability of the steady state $(N_0,M_0,P_0,Q_0)$ are
\begin{subequations}\label{routh}
\begin{align}
b_1 &> 0,\label{routha}\\
b_4 &> 0,\label{routhb}\\
b_1b_2-b_3 &> 0,\label{routhc}\\
b_1(b_2b_3-b_1b_4)-b_3^2&>0.\label{routhd}
\end{align}
\end{subequations}
Hence, instability of the steady state may arise for some traveling wave speed $v$ if any of the above conditions are violated, i.e.
\begin{equation}\label{instab}
b_1\leq0,\quad b_4 \leq 0,\quad b_1b_2-b_3\leq 0,\quad b_1(b_2b_3-b_1b_4)-b_3^2\leq0.
\end{equation}
In other words, a regime of stability/instability could be created by varying the bifurcation parameter $v$, around the critical values $v_\mp$. Note that the last condition \eqref{routhd} corresponds, at equality, to a Hopf bifurcation with two roots of $(2.3)$ having purely imaginary complex conjugate values. This condition is quartic in $v$ and has the form 
\begin{equation}\label{hopf}
f(v)=v^2(Av^2+B)=0
\end{equation}
where,
\begin{align}\label{AB}
A&=\frac{(D_1+D_2)\big( -\epsilon \alpha G_0+\alpha k F'_0 G_0 +(\epsilon G_0-\beta k F)G'_0\big)}{\alpha \beta k D_1^3D_2^3}\\
B&=\frac{1}{\alpha^2 \beta^2 k^2 D_1^3D_2^3}\Big( \alpha^2 G_0 \big(\epsilon^2 D_2^2 G_0+\beta k (D_1+D_2)^2(\epsilon G_0- \beta k F_0) \big) \notag \\
&+ \big(\alpha k D_2 G_0 F'_0+D_1(\beta k F_0-\epsilon G)G'_0\big)\big(-2 \epsilon \alpha D_2 G_0 \alpha k D_2 G_0 F'_0+D_1(\beta k F_0-\epsilon G)G'_0\big)\Big).\notag \\
\end{align}
The existence of real nonzero roots of \eqref{hopf}, requires the necessary  condition $AB<0$, since on the Hopf curve
\begin{equation}\label{root}
v_\mp=\mp \sqrt{\frac{-B}{A}}.
\end{equation}
The velocity of the traveling wave $v_\mp$ will give the change of stability of the steady state $(N_0,M_0,P_0,Q_0)$ in the following manner:
\begin {itemize}
\item[(a)] If $A>0$, and $B<0$ then the fixed  point is stable in the region $v\in (-\infty,v_{-})\cup (v_{+},\infty)$ and unstable for $v\in (v_{-}, v_{+})$,
\item[(b)] if $A<0$, and $B>0$ then the fixed  point is unstable in the region $v\in (-\infty,v_{-})\cup (v_{+},\infty)$ and stable for $v\in (v_{-}, v_{+})$,
\item[(c)] if $A\geq0$, and $B>0$ then $f(v)>0$, the steady state is stable $\forall v$,
\item[(d)] if $A\leq0$, and $B<0$ then $f(v)<0$, the steady state will be unstable $\forall v$.
\end{itemize}

\section{Hopf bifurcation analysis}

We will perform a Hopf bifurcation analysis \cites{Segel:1, Gierer:2,Keener, Segel:2} to show that as the value of $v$ passes through the critical values $v_\mp$, periodic solutions occur. To determine the behavior of the eigenvalues $\lambda$ as $v$ varies , we use the following lemma.
\begin{lemma}
The characteristic equation \eqref{charac} with  $b_i\in \Re$, $r_i$ roots for $i=1,...,4$ and discriminant given by
\begin{align}\label{discriminant}
\Delta=\prod_{i<j,i\neq j}^4 (r_i-r_j)^2,
\end{align}
has a pair of purely imaginary roots and two real roots only when $\Delta<0$.
\end{lemma}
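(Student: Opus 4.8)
The plan is to read off the sign of $\Delta$ directly from its product representation \eqref{discriminant}, using only the assumed location of the roots. Because the coefficients $b_i$ are real, the nonreal roots of \eqref{charac} come in complex-conjugate pairs, so a purely imaginary root $i\omega$ (here ``purely imaginary'' means $\omega\in\Re$ and $\omega\neq0$) is necessarily accompanied by $-i\omega$. Thus under the hypothesis we may label the four roots as $r_1,r_2\in\Re$ with $r_1\neq r_2$, together with $r_3=i\omega$ and $r_4=-i\omega$.

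The next step is to substitute these into \eqref{discriminant} and group the six squared differences by the roots they involve. The ``imaginary--imaginary'' factor is $(r_3-r_4)^2=(2i\omega)^2=-4\omega^2<0$. The four ``real--imaginary'' factors pair off as $(r_1-i\omega)^2(r_1+i\omega)^2=(r_1^2+\omega^2)^2$ and $(r_2-i\omega)^2(r_2+i\omega)^2=(r_2^2+\omega^2)^2$, both strictly positive since $\omega\neq0$. The remaining ``real--real'' factor $(r_1-r_2)^2$ is strictly positive because the two real roots are distinct. Multiplying the six factors gives \[ \Delta=-4\omega^2\,(r_1-r_2)^2\,(r_1^2+\omega^2)^2\,(r_2^2+\omega^2)^2<0, \] which is the asserted conclusion.

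It is worth recording (though not needed for the lemma) that the stated condition is only necessary, not sufficient: the identical grouping applied to any real quartic with two distinct real roots $r_1,r_2$ and one conjugate pair $a\pm bi$, $b\neq0$, gives $\Delta=-4b^2(r_1-r_2)^2\,|r_1-a-bi|^4\,|r_2-a-bi|^4<0$ whether or not $a=0$; hence $\Delta<0$ alone does not pin the complex pair to the imaginary axis, which is precisely why the statement reads ``only when'' and why, in \S3, $\Delta<0$ is combined with the Routh--Hurwitz equality \eqref{routhd} to locate the Hopf point. There is essentially no analytic obstacle here; the only points needing care are invoking reality of the $b_i$ to obtain the conjugate pairing (and the convention $\omega\neq0$), and the distinctness $r_1\neq r_2$, without which $\Delta$ would vanish rather than be strictly negative — so the argument implicitly understands ``two real roots'' to mean two \emph{distinct} real roots, which is the case of interest at a Hopf bifurcation.
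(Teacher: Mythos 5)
Your proof is correct, and it proves exactly the stated direction (root configuration $\Rightarrow \Delta<0$), but it takes a genuinely different route from the paper. You argue directly from the product definition \eqref{discriminant}: using realness of the $b_i$ to force the conjugate pairing $\pm i\omega$, you group the six squared differences so that $\Delta=-4\omega^2(r_1-r_2)^2(r_1^2+\omega^2)^2(r_2^2+\omega^2)^2<0$. The paper instead works on the Hopf curve: it substitutes the equality version of \eqref{routhd}, $b_4=\tfrac{b_2b_3}{b_1}-\tfrac{b_3^2}{b_1^2}$, into the discriminant to get an explicit expression in the $b_i$, factors the quartic there as $(\lambda^2+\omega^2)(\lambda^2+s\lambda+p)$, and reads off that $\Delta<0$ is equivalent (given $b_3>0$, $b_1>0$) to $\bar b_1^3-4\bar b_1\bar b_2+4\bar b_3>0$, which holds because $r_{3,4}\in\Re$. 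Your argument is shorter, coefficient-free, and valid for any real quartic with the stated root structure; it also makes transparent your (correct) remark that $\Delta<0$ is merely necessary, since the same grouping works for any non-real conjugate pair $a\pm bi$. What the paper's computation buys in exchange is the explicit formulas $\omega=\sqrt{\bar b_3/\bar b_1}$ and $r_{3,4}$ in terms of the $\bar b_i$, which feed directly into the frequency \eqref{omega} and the subsequent Hopf analysis, and a sign criterion expressed in checkable coefficient inequalities along the Hopf curve. Both arguments share the same implicit requirement that the two real roots be distinct (otherwise $\Delta=0$), which you flag explicitly and the paper encodes in the strict inequality $\bar b_1^3-4\bar b_1\bar b_2+4\bar b_3>0$.
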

\begin{proof} At $v=v_\mp$, $b_4=\frac{b_2 b_3}{b_1}-\frac{b_3^2}{b_1^2}$, then the discriminant becomes 
\begin{equation}
\Delta=-\frac{4 b_3 (b_1^3-4 b_1 b_2+4 b_3)(b_1^2b_2^2+b_1^3b_3-4b_1b_2b_3+4 b_3^2)^2}{b_1^6}.
\end{equation}
Since $b_3>0$ then $\Delta<0$ when $b_1^3-4 b_1 b_2+4 b_3>0$. We will see next why this last condition is satisfied. 

Rewriting \eqref{charac} as
\begin{align}\label{gee}
g(\lambda, v)&=\lambda^4+ \frac{(D_1+D_2)v}{D_1D_2} \lambda ^3+ \frac{\alpha(\beta k v^2-\epsilon D_2G_0+kD_2 F'_0G_0)+D_1(\epsilon G_0- \beta k F)G'_0}{\alpha \beta k D_1D_2} \lambda ^2 \\ \notag
&+ \frac{v\big( -\epsilon \alpha G_0+\alpha k F'_0 G_0 +(\epsilon G_0-\beta k F)G'_0\big)}{\alpha \beta k D_1D_2} \lambda + \frac{(\beta k F_0-\epsilon G_0)\big( \alpha \beta k +(\epsilon-k F_0)G'_0\big)G_0}{\alpha \beta^2 k^2 D_1D_2},
\end{align}
and evaluating this on the Hopf curve, i.e., at $v=v_\mp$ yields to
\begin{equation}\label{charachopf}
g(\lambda, v_\mp)=(\lambda^2+ \omega^2)(\lambda^2+s \lambda+p),
\end{equation}
where,
\begin{align}\label{roots}
r_{1,2}&=\mp i\omega=\mp i\sqrt{\frac{\bar{b}_3}{\bar{b}_1}}\\ \notag
r_{3,4}&=\frac 12 \Big(\bar{b}_1 \pm \sqrt{\frac{\bar{b}_1^3-4 \bar{b}_1 \bar{b}_2+4 \bar{b}_3}{\bar{b}_1}} \Big)
\end{align}
and $\bar{b}_i=b_i(v_\mp)$. Since we require that $b_1>0$ by \eqref{routha}, and $r_{3,4}\in\Re$,  then $b_1^3-4 b_1 b_2+4 b_3>0$. This leads to a Hopf bifurcation setting as evidenced by the pair of imaginary eigenvalues $r_{1,2}$ that oscillate with frequency 
\begin {equation} \label{omega}
\omega=\sqrt{\frac{G'_0(\epsilon G_0- \beta k F_0)+\alpha G_0(k F'_0-\epsilon)}{\alpha \beta k(D_1+D_2)}}.
\end{equation}
\end{proof}
In order to introduce the relevant notation, we state the Hopf bifurcation theorem.
\begin{theorem}
Let
\begin{equation} \label{hopftheo}
\frac{d\vec{x}}{dt}=\vec{F}(\vec{x},\mu)
\end{equation}
be an autonomous system of differential equations for each value of the parameter $\mu \in (-\mu_0,\mu_0)$, where $\mu_0$ is a positive number and the vector function $\vec{F} \in C^2(D\times (-\mu_0,\mu_0))$, where $D$ is a domain in $\Re^n$. Suppose that the system \eqref{hopftheo} has a critical point $x_0(\mu)$, that is,
\begin{equation}
\vec{F}(\vec{x}_0(\mu),\mu)=0.
\end{equation}
Let $\vec{J}(\mu)$ be the Jacobian matrix of system \eqref{hopftheo} at $x_0(\mu)$. Suppose that $det(\vec{J}(\mu)-\lambda I)=0$ has a complex conjugate pair of solutions $\lambda(\mu),\lambda^*(\mu)$ such that for $\mu>0$, $Re\lambda(\mu)>0$; $\mu=0$, $Re\lambda(\mu)=0$; $\mu<0$, $Re\lambda(\mu)<0$; and $\frac{d Re \lambda(\mu)}{d \mu}|_{\mu=0}>0$.  Assuming that all other $\lambda$'s are distinct, \eqref{hopftheo} has a periodic solution in some neighborhood of $\mu=0$ and $\vec{x}$ in some neighborhood of $\vec{x}_0(\mu)$.
\end{theorem}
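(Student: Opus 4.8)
The statement is the classical Hopf bifurcation theorem, so the plan is to prove it by a Lyapunov--Schmidt reduction in a space of periodic functions, using the $S^1$ time-translation symmetry that any autonomous system carries. First I would move the equilibrium to the origin, setting $\vec y=\vec x-\vec x_0(\mu)$ so that $\dot{\vec y}=\vec G(\vec y,\mu)$ with $\vec G(\vec 0,\mu)=\vec 0$ and $D_{\vec y}\vec G(\vec 0,\mu)=\vec J(\mu)$. Because the bifurcating eigenvalues at $\mu=0$ are $\lambda(0)=i\omega_0$ and $\lambda^*(0)=-i\omega_0$ with $\omega_0>0$, a nascent periodic orbit should have period near $T_0=2\pi/\omega_0$, so I would rescale time by $t=\bigl((T_0+\tau)/2\pi\bigr)s$, turning the unknown period into the fixed value $2\pi$ in $s$ at the cost of a new scalar parameter $\tau$; the system becomes $\vec y'(s)=\bigl((T_0+\tau)/2\pi\bigr)\vec G(\vec y,\mu)$.

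Next I would recast this as an operator equation: on the spaces $X$ of $C^1$ and $Y$ of $C^0$ maps $\Re\to\Re^n$ of period $2\pi$, set $\Phi(\vec y,\mu,\tau)=\vec y'-\bigl((T_0+\tau)/2\pi\bigr)\vec G(\vec y,\mu)$, which is $C^1$ since $\vec F\in C^2$ and satisfies $\Phi(\vec 0,\mu,\tau)\equiv\vec 0$ on the trivial branch. The linearization $L=D_{\vec y}\Phi(\vec 0,0,0)=\tfrac{d}{ds}-\tfrac{T_0}{2\pi}\vec J(0)$ is Fredholm of index zero ($\tfrac{d}{ds}$ being Fredholm of index zero on periodic functions, and the constant-matrix term a compact perturbation through the embedding $C^1\hookrightarrow C^0$). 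A Fourier computation gives that $\ker L$ is the two-dimensional real span of $\mathrm{Re}(e^{is}\vec v_0)$ and $\mathrm{Im}(e^{is}\vec v_0)$, where $\vec v_0$ is the eigenvector of $\vec J(0)$ for $i\omega_0$; the assumptions that $\pm i\omega_0$ are simple and that no other eigenvalue is an integer multiple $im\omega_0$ (the content of the distinctness hypothesis) are exactly what rule out further kernel directions, and a complement of $\mathrm{Range}(L)$ in $Y$ is the span of the analogous two modes built from an adjoint eigenvector.

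Then I would carry out the reduction: split $X=\ker L\oplus X_1$, $Y=\mathrm{Range}(L)\oplus Y_1$, solve the range-projection $P\Phi=\vec 0$ for $\vec y_1=\vec y_1(\vec y_k,\mu,\tau)\in X_1$ by the implicit function theorem, and substitute back to get the bifurcation equation $(I-P)\Phi(\vec y_k+\vec y_1,\mu,\tau)=\vec 0$, i.e.\ two real equations in the two coordinates of $\vec y_k$ and the parameters $\mu,\tau$. Time translation of the autonomous system becomes the phase shift $s\mapsto s+\phi$ on $2\pi$-periodic maps and commutes with $\Phi$, so I would fix the phase by restricting $\vec y_k$ to the half-ray $a\,\mathrm{Re}(e^{is}\vec v_0)$, $a\ge0$: one component of the bifurcation equation then vanishes identically, and since $\Phi$ vanishes at $\vec y=\vec 0$ the other is divisible by $a$, leaving a reduced equation $\widetilde\Phi(a,\mu,\tau)=\vec 0$ with $\widetilde\Phi(0,0,0)=\vec 0$.

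Finally I would apply the implicit function theorem to $\widetilde\Phi$ to solve $\mu=\mu(a)$, $\tau=\tau(a)$ for small $a\ge0$, obtaining a one-parameter family of genuine $2\pi$-periodic solutions of the rescaled system emanating from $\vec y=\vec 0$; undoing the rescaling yields periodic solutions of $\dot{\vec x}=\vec F(\vec x,\mu)$ of period near $T_0$ for $\mu$ near $0$ and $\vec x$ near $\vec x_0(\mu)$, which is the claim. The hard part --- and the step that actually consumes the hypotheses --- is checking that the $2\times2$ Jacobian of $\widetilde\Phi$ in $(\mu,\tau)$ at the origin is nonsingular: the $\tau$-derivative perturbs the purely imaginary frequency and is nondegenerate by the simplicity of $\pm i\omega_0$, while the $\mu$-derivative is controlled precisely by the transversality condition $\tfrac{d}{d\mu}\mathrm{Re}\,\lambda(\mu)\big|_{\mu=0}>0$, which guarantees the eigenvalue pair crosses, rather than merely touches, the imaginary axis. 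Doing that bookkeeping --- differentiating the eigenvalue relation $\det(\vec J(\mu)-\lambda I)=0$ and tracking how $\ker L$ tilts as $(\mu,\tau)$ move --- is where the care lies; everything else is routine reduction-and-implicit-function machinery. A slicker-looking alternative is a two-dimensional center-manifold reduction plus Poincar\'e normal form, collapsing everything to a scalar amplitude equation $\dot r=(\mathrm{Re}\,\lambda(\mu))\,r+O(r^3)$, but that demands more regularity than the stated $C^2$, so I would stay with the Lyapunov--Schmidt route.
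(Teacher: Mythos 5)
Your proposal is a correct outline of the standard Lyapunov--Schmidt proof of the abstract Hopf theorem, but it takes a genuinely different route from the paper --- in fact the paper never proves the abstract statement at all. What sits inside the paper's proof environment is an application: the authors treat the Hopf theorem as a known result, introduce the bifurcation parameter $\mu=\frac1v-\frac1{v_0}$ with $v_0=v_\mp$, implicitly differentiate the characteristic quartic $g(\lambda,v)=0$ of \eqref{gee} on the Hopf curve to get $\frac{d\lambda}{dv}$, extract $\frac{d\,\mathrm{Re}\,\lambda}{dv}$ at $v_0$ in terms of the quantity $\Psi$, show $\Psi<0$ using the expression \eqref{omega} for $\omega$, and combine this with $\frac{dv}{d\mu}<0$ to verify the transversality condition $\frac{d\,\mathrm{Re}\,\lambda(\mu)}{d\mu}\big|_{\mu=0}>0$; the purely imaginary pair with the remaining roots real is supplied by Lemma 1. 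So the paper's content is a verification of the hypotheses for the specific traveling-wave system, whereas you prove the general theorem from scratch. Your route makes the statement self-contained and exposes exactly where the hypotheses are consumed --- in particular it shows that the loosely worded assumption that ``all other $\lambda$'s are distinct'' must really be read as simplicity of $\pm i\omega_0$ plus nonresonance (no eigenvalue equal to $ik\omega_0$ for integer $k\neq\pm1$), which you correctly flag; the paper's route delivers what the rest of the paper actually uses, namely the explicit critical speeds $v_\mp$ and the crossing direction for the predator--prey models, which the abstract argument alone does not provide. Two small cautions on your sketch: after fixing the phase it is not that one component of the bifurcation equation vanishes identically --- by $S^1$-equivariance both components are divisible by $a$, and the reduced problem is two equations in $(\mu,\tau)$ solved by the implicit function theorem --- and the $C^2$ hypothesis is precisely what keeps the quotient by $a$ of class $C^1$ so that this last step is legitimate; neither point affects the viability of your approach.
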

\begin{proof}
In order to apply this theorem to \eqref{gee}, we define the bifurcation parameter
\begin{equation}
	\mu=\frac 1 v -\frac{1}{v_0},
\end{equation} then
\begin{equation}
	v(\mu)=\frac{v_0}{1+v_0 \mu}
\end{equation}
with $\mu=0$ at $v=v_0$. For $\mu>0$, $v<v_0$ and $Re\lambda(\mu)>0$. For $\mu=0$, $v=v_0$ and  $Re\lambda(\mu)=0$, and for  $\mu<0$,  $v>v_0$ and $Re\lambda(\mu)<0$. Thus, the first set of conditions in the theorem are valid, and it remains only to show that
\begin{equation}
\frac{d Re \lambda(v(\mu))}{d \mu}|_{\mu=0}=\frac{d Re \lambda(v)}{d v}\frac{dv}{d \mu}|_{\mu=0}>0.
\end{equation}
For $v_0=v_{\mp}$, then $g(\lambda(v_0),v_0)=g(\mp i \omega,v_0)=0$ by \eqref{charachopf}. Hence, implicitly differentiating $g(\lambda(v),v)=0$, \eqref{gee}, yields:
\begin{equation}
\frac{d \lambda}{d v}=-\frac{\frac{\delta g}{\delta v}}{\frac{\delta g}{\delta \lambda}}=\frac{\omega(\bar{b}_3^\prime-\omega ^2 \bar{b}_1^\prime+i \omega \bar{b}_2^\prime)}{2\omega(2\omega^2-\bar{b}_2)+i(\bar{b}_3-3 \omega ^2\bar{b}_1)},
\end{equation}
where $\bar{b}_i^\prime=\frac{d b_i}{dv}|_{v=v_0}$,
therefore
\begin{equation}
\frac{d Re \lambda(v_0)}{d v}=\frac{\omega^2 \Psi}{4\omega^2(2\omega^2-\bar{b}_2)^2+(\bar{b}_3-3 \omega ^2\bar{b}_1)^2},
\end{equation}
where
\begin{equation}
\Psi=2(2 \omega ^2-\bar{b}_2)(\bar{b}_3^\prime-\omega ^2 \bar{b}_1^\prime)+\bar{b}_2^\prime(\bar{b}_3-3 \omega^2\bar{b}_1).
\end{equation}
Evaluating the required derivatives of $b_i$'s at $v_0$ and using \eqref{omega} yields
\begin{equation}
\Psi=\frac{4 v_0^2}{D_1^2 D_2^2}\frac{G'_0(-\epsilon G_0+ \beta k F_0)+\alpha G_0(-k F'_0+\epsilon)}{\alpha \beta k}.
\end{equation}
Thus, assuming that $D_1+D_2>0$, and using  \eqref{omega}, then $\Psi<0$, and hence $\frac{d Re \lambda(v_0)}{d v}<0$. Since $\frac{dv}{d \mu}=-\frac{v_0^2}{(1+v_0 \mu)^2}<0$, then $\frac{d Re \lambda(v(\mu))}{d \mu}|_{\mu=0}>0$. \\

All the conditions of the Hopf bifurcation theorem are satisfied. Thus, Hopf bifurcations occur and periodic solutions will exist in the neighborhood of $v_0$.
\end{proof}

\section{Contracting/Dilatory behavior}

The stability of the bifurcating closed orbits may be investigated for each the specific choice of nonlinearity $F(N)$ and $G(P)$, by reducing the system to the center manifold (since one has  two purely imaginary eigenvalues) as done in [28]. This will not be considered in here. Instead, we shall consider numerical solutions of \eqref{4D} in the following section, which will allow both the verification of the preceding analysis and also yield more quantitative results.

We will concentrate on the five specific choices of $F(N)$ and $G(P)$  referred to us in this paper as systems A--E. For all models, the local rate of change of volume of the $(N,M,P,Q)$ phase--space in the vicinity of the fixed points $(N_0,M_0,P_0,Q_0)$, which gives the local logarithmic rate of change of $(N,M,P,Q)$ phase--space volume $V$ is given by the trace of the Jacobian matrix of \eqref{jaco} at the fixed points, where $Tr(J)=\frac1V\frac{\mathrm{d}V}{\mathrm{d}t}=-\frac{(D_1+D_2)v}{D_1D_2}\equiv -b_1$. A necessary condition for the stability of the steady state is that $b_1>0$ by \eqref{routha}, therefore models that start from stable/unstable fixed points (depending upon one or more of \eqref{routhb}--\eqref{routhd} is violated) will be locally dissipative, i.e., (phase--space volumes contract), so we may anticipate that the orbits may go to an attractor at infinity if the dissipation is weak, or dilatory (volumes expand) if \eqref{routha} is violated. If the fixed point is stable, the predator population is ultimately decimated, i.e., $k$ and the rate of conversion $\beta$ of prey into predator are not large enough to sustain the predator population. If the fixed point is unstable, for a parameter regime where the  system is strongly dissipative, one might anticipate possible bounded chaotic dynamics evolving on a strange attractor. This will be tested numerically in the next section.

\subsection{System A}
Using $F(N)=\epsilon$, $G(P)=\gamma$, and $\tilde{\epsilon}=0$, \eqref{4D} becomes
\begin{align}\label{4DA}
\dot{N}&=M\\
\dot{M}&=\frac{1}{D_1}\big(-vM-\epsilon N+\alpha NP \big)\notag \\
\dot{P}&=Q \notag \\
\dot{Q}&=\frac{1}{D_2}\big(-vQ+\gamma P-\beta NP\big), \notag 
\end{align}
with equilibrium points $(N_0,M_0,P_0,Q_0)=\big(\frac{\gamma}{\beta},0,\frac{\epsilon}{\alpha},0\big)$. The characteristic equation \eqref{charac} has coefficients 
\begin{align}\label{bA}
b_1&=\frac{(D_1+D_2)v}{D_1D_2}\\
b_2&=\frac{v^2}{D_1D_2}\notag \\
b_3&=0\notag \\
b_4&=\frac{\epsilon \gamma}{D_1D_2}. \notag 
\end{align}
Therefore, the Hopf curve \eqref{hopf} is
\begin{equation}\label{HopfA}
f(v)=-\frac{(D_1+D_2)^2\epsilon \gamma}{D_1^3D_2^3}v^2 \equiv 0,
\end{equation}
hence, the bifurcation parameter is 
\begin{equation}\label{vA}
v_\mp=0.
\end{equation}
The characteristic polynomial \eqref{charac} evaluated at the fixed point and on the Hopf curve \eqref{vA} has the form
\begin{equation}\label{charachopfA}
g(\lambda, v_\mp)=\lambda^4 +\bar{b}_4,
\end{equation}
and $\bar{b}_4=b_4(v_\mp)$.

\subsection{System B}
Using $F(N)=\epsilon$, $G(P)=\gamma$, and $\tilde{\epsilon}=\epsilon$, \eqref{4D} becomes
\begin{align}\label{4DB}
\dot{N}&=M\\
\dot{M}&=\frac{1}{D_1}\big(-vM-\epsilon N+\alpha NP+\frac{\epsilon N^2}{k} \big)\notag \\
\dot{P}&=Q \notag \\
\dot{Q}&=\frac{1}{D_2}\big(-vQ+\gamma P-\beta NP\big), \notag 
\end{align}
with equilibrium points $(N_0,M_0,P_0,Q_0)=\big(\frac{\gamma}{\beta},0,\frac{\epsilon}{\alpha}(1-\frac{\gamma}{\beta k}),0\big)$. The characteristic equation \eqref{charac} has coefficients 
\begin{align}\label{bB}
b_1&=\frac{(D_1+D_2)v}{D_1D_2}\\
b_2&=\frac{\beta k v^2-\epsilon \gamma D_2}{\beta k D_1D_2}\notag \\
b_3&=-\frac{\epsilon \gamma v}{\beta k D_1D_2}\notag \\
b_4&=\frac{\epsilon \gamma(\beta k- \gamma)}{\beta k D_1D_2}. \notag 
\end{align}
Therefore, on the Hopf curve, the bifurcation parameter is 
\begin{equation}\label{vB}
v_\mp=\mp \sqrt{\frac{\epsilon \gamma D_2^2+\beta k(D_1+D_2)^2(\gamma-\beta k)}{\beta k (D_1+D_2)}},
\end{equation} 
where 
\begin{equation}\label{HopfB}
f(v)=\frac{\epsilon \gamma v^2}{\beta ^2 k ^2 D_1^3D_2^3}\big(-\beta k (D_1+D_2)v^2+\epsilon \gamma D_2^2+\gamma \beta k (D_1+D_2)^2-\beta^2 k^2 (D_1+D_2)^2\big).
\end{equation}
The characteristic polynomial \eqref{charac} evaluated at the fixed point and on the Hopf curve \eqref{vB} has the form
\begin{equation}\label{charachopfB}
g(\lambda, v_\mp)=\lambda^4 +\bar{b}_1 \lambda ^3+\bar{b}_2 \lambda ^2+\bar{b}_3 \lambda+\bar{b}_4,
\end{equation}
where $\bar{b}_i=b_i(v_\mp)$.

\subsection{System C}
Using $F(N)=k_0$, $G(P)=d+c P$, and $\tilde{\epsilon}=\epsilon$, \eqref{4D} becomes
\begin{align}\label{4DC}
\dot{N}&=M\\
\dot{M}&=\frac{1}{D_1}\big(-vM-k_0 N+\alpha NP+\frac{\epsilon N^2}{k} \big)\notag \\
\dot{P}&=Q \notag \\
\dot{Q}&=\frac{1}{D_2}\big(-vQ+d P-\beta NP+ c P^2\big), \notag 
\end{align}
with equilibrium points $(N_0,M_0,P_0,Q_0)=(\frac{k(\alpha d+ c k_0)}{\epsilon c+ \alpha \beta k},0,\frac{\beta k k_0-\epsilon d}{\epsilon c + \alpha \beta k},0)$. The characteristic equation \eqref{charac} has coefficients 
\begin{align}\label{bC}
b_1&=\frac{(D_1+D_2)v}{D_1D_2}\\
b_2&=\frac{\alpha( \beta k v^2 -\epsilon d D_2)+c\big(\epsilon d D_1 -(\epsilon D_2+\beta k D_1)k_0+\epsilon v^2\big)}{(\epsilon c +\alpha  \beta k )D_1D_2}\notag \\
b_3&=\frac{v\big(\epsilon d (c-\alpha )-c
(\epsilon +\beta k)\big)}{(\epsilon c +\alpha  \beta k )D_1D_2}\notag \\
b_4&=\frac{(\alpha d + c k_0)(\beta k k_0-\epsilon d)}{(\epsilon c +\alpha  \beta k )D_1D_2}. \notag 
\end{align}
Therefore, on the Hopf curve, the bifurcation parameter is 
\begin{equation}\label{vC}
v_\mp=\mp \sqrt{\frac{c^2(\epsilon d D_1+\epsilon k_0 D_2-\beta k k_0 D_1)^2+\alpha^2 d \big(d\epsilon^2D_2^2+\beta k(\epsilon d-\beta k k_0)(D_1+D_2)^2\big)+\Theta}{(\epsilon c+\alpha \beta k)(D_1+D_2)\big(\epsilon d(\alpha-c)+c(\epsilon+\beta k)k_0\big)}}
\end{equation} 
where 
\begin{align}\label{HopfC}
f(v)&=\frac{v^2}{(\epsilon c + \alpha \beta k)^2D_1^3D_2^3}\Big((\epsilon c+\alpha \beta k)(D_1+D_2)\big((\alpha D+c k_0)(\epsilon d- \beta k k_0)(D_1+D_2)\\ \notag
& +\frac{\big(\epsilon d(\alpha -c)+c(\epsilon+\beta k)k_0)\big)\big(c(\beta k k_0-\epsilon d)D_1+\epsilon(\alpha d+c k_0)D_2-(\epsilon c+\alpha \beta k)v^2\big)}{\epsilon c + \alpha \beta k}\big)\\ \notag
& -D_1D_2\big(\epsilon d (\alpha-c)+ck_0(\epsilon+\beta k)\big)^2\Big),
\end{align}
and $\Theta=\alpha c \Big(2 \epsilon^2 d^2 D_1 D_2+\epsilon d k_0\big(2\epsilon D_2^2+\beta k (D_1^2+D_2^2)\big)-\beta^2 k^2 k_0^2(D_1+D_2)^2\Big).$

In this case, the characteristic polynomial \eqref{charac} evaluated at the fixed point and on the Hopf curve \eqref{vC} has the same form as \eqref{charachopfB}.

\subsection{System D}
Using $F(N)=k_0(1+\frac{N}{k})$, $G(P)=d+c P$, and $\tilde{\epsilon}=\epsilon$ ,\eqref{4D} becomes
\begin{align}\label{4DD}
\dot{N}&=M\\
\dot{M}&=\frac{1}{D_1}\Big(-vM-k_0 N+\alpha NP+\frac{(\epsilon-k_0) N^2}{k} \Big)\notag \\
\dot{P}&=Q \notag \\
\dot{Q}&=\frac{1}{D_2}\Big(-vQ+d P-\beta NP+ c P^2\Big), \notag 
\end{align}
with equilibrium points $(N_0,M_0,P_0,Q_0)=(\frac{k(\alpha d+ c k_0)}{(\epsilon-k_0) c+ \alpha \beta k},0,\frac{\beta k k_0-(\epsilon-k_0) d}{(\epsilon-k_0) c + \alpha \beta k},0)$. 
This system is not quantitatively different form System C, therefore all the equations \eqref{bC},\eqref{vC}, and \eqref{HopfC} will stay the same as long as we replace $\epsilon\rightarrow\epsilon-k_0$. 
\subsection{System E}
Using, $F(N)=\frac{1+\delta N}{1+N^2}$, $G(P)=\gamma(1+k P^2)$, and $\tilde{\epsilon}=0$, \eqref{4D} becomes
\begin{align}\label{4DE}
\dot{N}&=M \\
\dot{M}&=\frac{1}{D_1}\Big(-vM-\frac{N(1+\delta N)}{1+N^2}\alpha NP\Big)\notag \\
\dot{P}&=Q \notag \\
\dot{Q}&=\frac{1}{D_2}\Big(-vQ+\gamma P(1+k P^2)-\beta NP\Big). \notag 
\end{align}
The equilibrium points are found numerically by solving the following system which involves a quintic algebraic equation in $N_0$.
\begin{align}
N_0&=\frac{\gamma}{\beta}(1+kP_0^2) \\
M_0&=0\\ \notag
P_0&=\frac{1+\delta N_0}{\alpha(1+N_0^2)}\\ \notag
Q_0&=0. \notag
\end{align}
The characteristic equation \eqref{charac} has coefficients 
\begin{align}\label{bE}
b_1&=\frac{(D_1+D_2)v}{D_1D_2}\\ \notag
b_2&=\frac{1}{D_1D_2}\Big[v^2+\beta N_0 D_1-\gamma D_1(1+3 k P_0^2)\\ \notag
&+\frac{D_2}{(1+N_0^2)^2}\big(1-\alpha P_0-N_0(N_0+\alpha N_0 P_0(2+N_0^2)-2\delta)\big)\Big]\\ \notag
b_3&=\frac{v}{D_1D_2(1+N_0^2)^2}\Big[\beta N_0^5+\big(\gamma+P_0(\alpha+3k \gamma P_0)\big)N_0^4+2\beta N_0^3+1-P_0(\alpha+3 k \gamma P_0)\\ \notag
&-\gamma +N_0(\beta +2 \delta)-N_0^2\big(1+2 \gamma+2P_0(\alpha+3 k \gamma P_0)\big)\Big] \\ \notag
b_4&=\frac{1}{D_1D_2}\Big[\alpha \beta N_0 P_0 -\frac{\big(\beta N_0-\gamma(1+3kP_0^2)\big)\big(\alpha P_0 -1+N_0(N_0-2\delta+\alpha N_0 P_0(2+N_0^2))\big)}{(1+N_0^2)^2}\Big].\notag 
\end{align}
Therefore, on the Hopf curve, the bifurcation parameter $v_\mp$ can only be found numerically by  \eqref{root}, where
\begin{align}
A&=\frac{D_1+D_2}{D_1^3 D_2^3(1+N_0^2)^2}\Big[\beta N_0^5+\big(\gamma+P_0(\alpha+3k \gamma P_0)\big)N_0^4+2\beta N_0^3+1-P_0(\alpha+3 k \gamma P_0)\\ \notag
&-\gamma+N_0(\beta +2 \delta)-N_0^2\big(1+2 \gamma+2P_0(\alpha+3 k \gamma P_0)\big)\Big] \\ \notag
B&=\frac{1}{D_1^3 D_2^3(1+N_0^2)^4}\Big[D_1^2(1+N_0^2)^4T_1+2D_1D_2(1+N_0^2)^2T_2+D_2^3 T_3\Big],\\ \notag
\end{align}
and
\begin{align}
T_1&=\beta ^2 N_0^2+(\gamma +3 \gamma k P_0^2)^2-\beta N_0\big(2\gamma +P_0(\alpha+6 \gamma kP_0)\big)  \\ \notag
T_2&=\alpha \gamma P_0(1+3k P_0^2)N_0^4-\beta N_0^3+\big(2\beta \delta +(1+3k P_0^2)(1+2 \alpha P_0)\big)N_0^2 \\ \notag
&+\big(\beta-2\gamma \delta(1+3k P_0^2)\big)N_0-\gamma(1+3k P_0^2)(1-\alpha P_0)\\ \notag
T_3&=-\alpha \beta P_0 N_0^9+\alpha^2 P_0^2 N_0^8-4 \alpha \beta P_0 N_0^7+2\alpha P_0(1+2\alpha P_0) N_0^6-2\alpha (3 \beta+2 \delta)P_0N_0^5\\ \notag
&+\big(1+2\alpha P_0(1+3\alpha P_0)\big)N_0^4-4\big(\delta + \alpha(\beta +2 \delta)P_0\big)N_0^3+2\big(2\delta^2-1-\alpha(1-2\alpha P_0)P_0\big)N_0^2 \\ \notag
&+\big(4 \delta -\alpha P_0(\beta +4 \delta)\big)N_0+(1-\alpha P_0)^2.
\end{align}

\section{Numerical results}

For the numerical results we will concentrate on our five systems, choosing for each system specific parameters that will show the dissipative or dilatory behavior.
\subsection{System A}
We choose parameters such that $\bar{b}_4=\frac{\epsilon \gamma}{D_1D_2}<0$. Since \eqref{routhb} is violated it means that we start from an unstable fixed point in a constant volume space. In this case, $f(v)>0, \forall v$, therefore the steady state remains stable since both populations will annihilate.  For the system parameters of $\alpha=1.5$, $\epsilon=1$, $\beta=-2.75$, $\gamma=-1.5$, $D_1=1.25$, $D_2=2.1$, and the bifurcation parameter $v=0.1$, then $\bar{b}_4=\frac{\epsilon\gamma}{D_1D_2}=-0.571429<0$, and the populations start to oscillate from the stationary point $(N_0,M_0,P_0,Q_0)=(0.55,0,0.67,0)$ with frequency  $\omega=\sqrt{-\bar{b}_4}=0.755$. Because the space is contracting, since $\bar{b}_3=0.128$, eventually both predators/prey populations will assimilate each other and reach the stable equilibrium null populations. This attenuating  behavior is presented in Fig.\ref{FigA1}. Note the stable periodic oscillations on the stable limit cycle created by a supercritical Hopf bifurcation at $v=v_{\mp}=0$. If we were to increase $v$ then the population would have terminated much faster.
\begin{figure}[!ht]
					\begin{center} 
\includegraphics[width=0.65\textwidth]{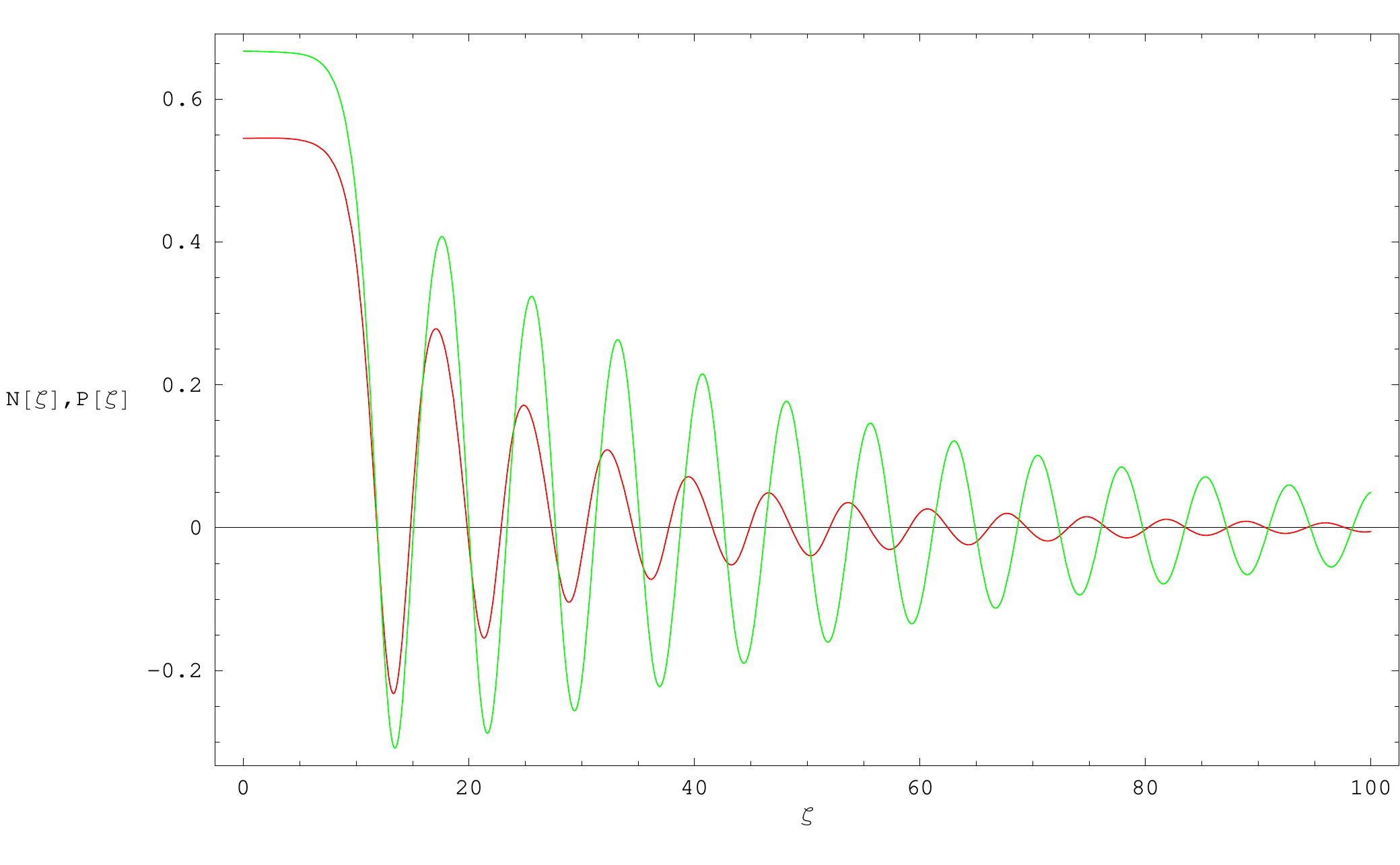}
					\end{center}
				\caption{\small{Periodic time series for populations of System A, $v=0.1$}}\label{FigA1}
\end{figure}
\subsection{System B}

For the system parameters given by the set $\alpha=-1.2$, $\epsilon=-3$, $\beta=-2$, $\gamma=-2$, $D_1=1$, $D_2=2$, $k=2$, the Hopf velocity is $v_\mp=\mp2$. Therefore, the populations start to oscillate from any stationary point with frequency $\omega=\sqrt{\frac{\bar{b}_3}{\bar{b}_1}}=\frac{\sqrt 2}{2}$.  To find the regimes when the fixed point changes stability, we find the coefficients of the Hopf condition \eqref{hopf}, which are $A=\frac {9}{16}$, and $B=-\frac94$, and we analyze $f(v)$. Hence, the fixed point $(N_0,M_0,P_0,Q_0)=(1,0,1.25,0)$ is stable in the region $v\in (-\infty,-2)\cup (2,\infty)$ and unstable for $v\in (-2, 2)$. 

Since the volume of the system is expansive on $v_{-}=-2$, and contractive on $v_{+}=2$, as we vary $v$ around $v_{\mp}$ we will expect different behavior on both sides of the bifurcation parameter. In a contracting space, $v_{+}=2$, then $\bar{b}_1=3,\bar{b}_2=3.5,\bar{b}_3=1.5,\bar{b}_4=1.5 $, and hence the population will oscillate from any fixed point toward the equilibrium $(N_0,M_0,P_0,Q_0)=(1,0,1.25,0)$.  

When $v=2.2$, the fixed point remains stable, hence the populations dissipate as in case A, but instead of reaching the null populations they will converge towards nonzero equilibrium values. This behavior is shown in Fig. \ref{FigB1}. 

If $v=1.9$, the fixed point becomes unstable, and, after an initial transient, both populations settle onto the stable limit cycle created by the supercritical Hopf bifurcation. The corresponding spatially periodic wavetrain in spatial variable $\zeta$ is shown in Fig. \ref{FigB2}.
\begin{figure}[!ht]
					\begin{center} 
\includegraphics[width=0.7\textwidth]{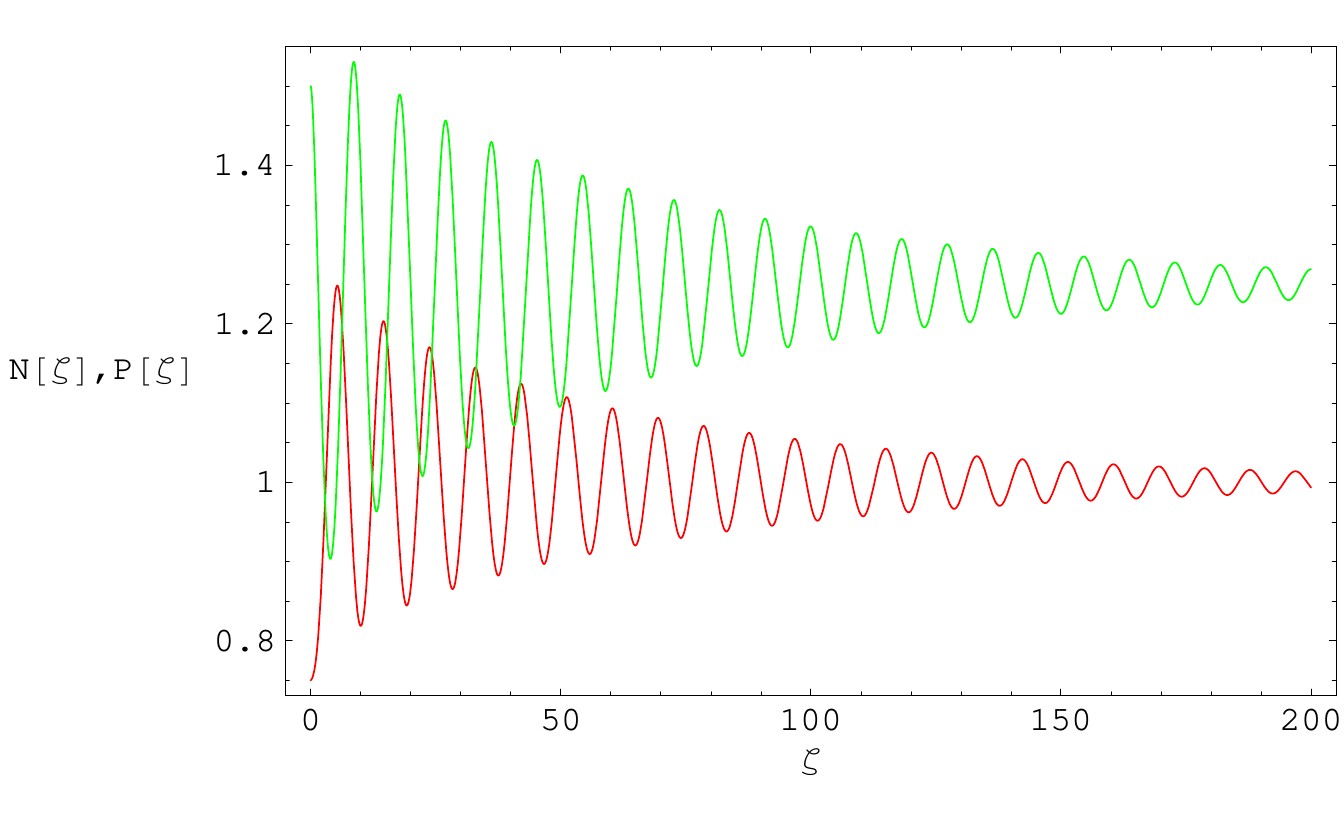}
					\end{center}
				\caption{\small{Periodic evolution for populations of System B, $v=2.2$}}\label{FigB1}
\end{figure}
\begin{figure}[!ht]
					\begin{center} 
\includegraphics[width=0.7\textwidth]{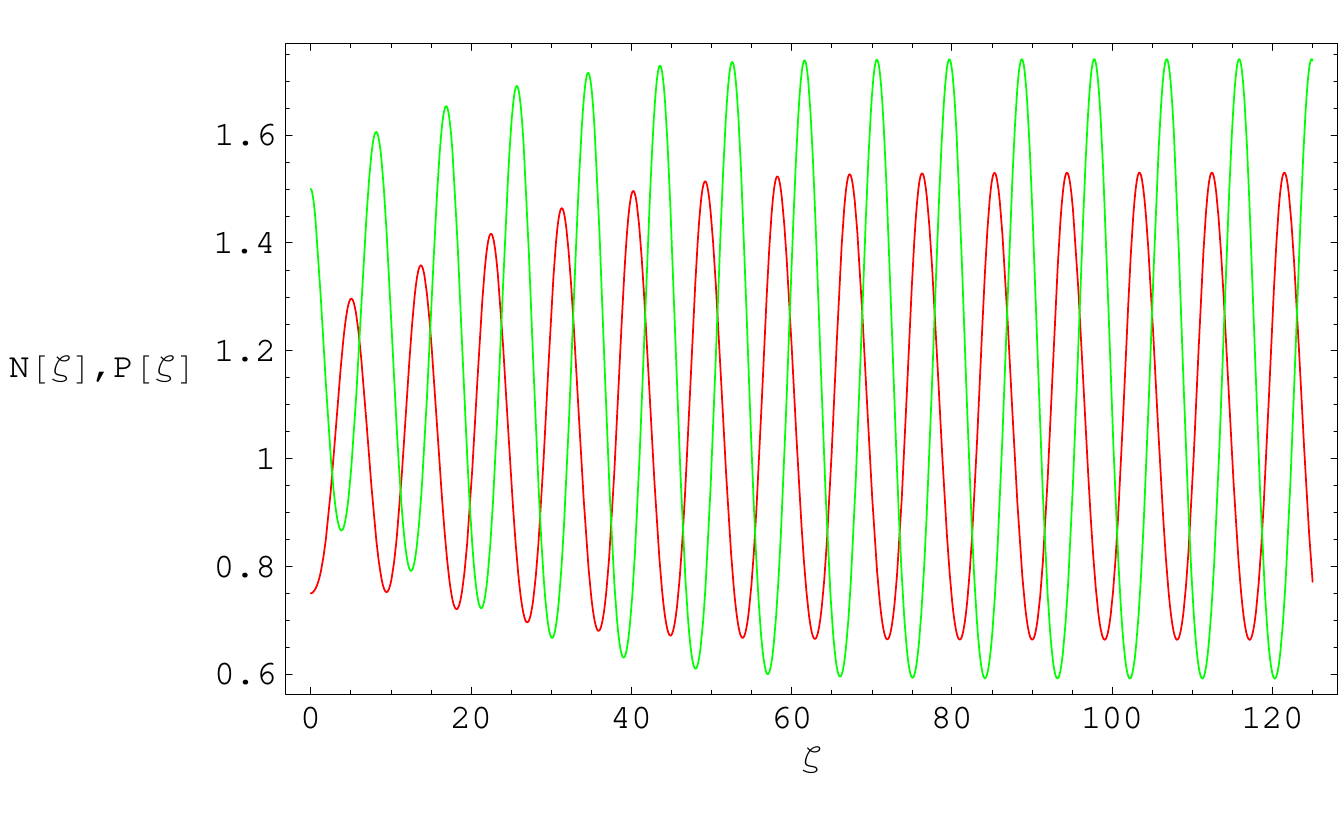}
					\end{center}
				\caption{\small{Periodic evolution for populations of System B, $v=1.9$}}\label{FigB2}
\end{figure}

By contrast, on the left side of the bifurcation parameter, the system is expansive or dilatory at $v_{-}=-2$ and undergoes a subcritical Hopf bifurcation which occurs at $v=v_{-}$. This corresponds to an unstable periodic orbit coexisting with an unstable fixed point $(N_0,M_0,P_0,Q_0)=(1,0,1.25,0)$, since \eqref{routhc} is violated. For this case, $\bar{b}_1=-3$, $\bar{b}_2=3.5$, $\bar{b}_3=-1.5$, $\bar{b}_4=1.5$. Because the system is expanding then the only possibility is to have an attractor at infinity. Hence, the populations blow at a finite value of $\zeta$.

\subsection{System C/System D}
Since these two systems are similar as explained in previous section, for numerical simulations we will describe the behavior of only System D. Choosing the system parameters given by the set $\alpha=1.25$, $\epsilon=1$, $\beta=2$, $c=0.5$, $d=2$, $k=2$, $k_0=2$, $D_1=1$ and $D_2=-2$, the equilibrium point is $(N_0,M_0,P_0,Q_0)=(1.55,0,2.22,0)$, while the bifurcation parameter on the Hopf curve is $v_{\mp}=\mp5.03$. Here, $\bar{b}_1=-2.51$, $\bar{b}_2=-11.22$, $\bar{b}_3=-0.83$, 
$\bar{b}_4=-3.88 $. For these values, \eqref{routhc} is not violated but  \eqref{routha} and  \eqref{routhb} are, hence the fixed point is unstable. To find the regimes when the fixed point changes stability we find the coefficients of the Hopf condition \eqref{hopf}, $A=-0.04$, and $B=1.05$, and we analyze $f(v)$. Hence, the fixed point will remain unstable in the region $v\in (-\infty,-5.03)\cup (5.03,\infty)$ and stable for $v\in (-5.03, 5.03)$.
Within the stable region if $v=-0.2$, the volume is weakly expanding hence we anticipate that the obits may go to an attractor at infinity. From Fig. \ref{FigD1} we can see the aperiodic behavior of the populations. The orbits fly off to an attractor at infinity as shown in Fig. \ref{FigD2} by both $N(\zeta)$, and $P(\zeta)$ blowing up around $\zeta=96$. 

\begin{figure}[!ht]
					\begin{center} 
\includegraphics[width=0.6\textwidth]{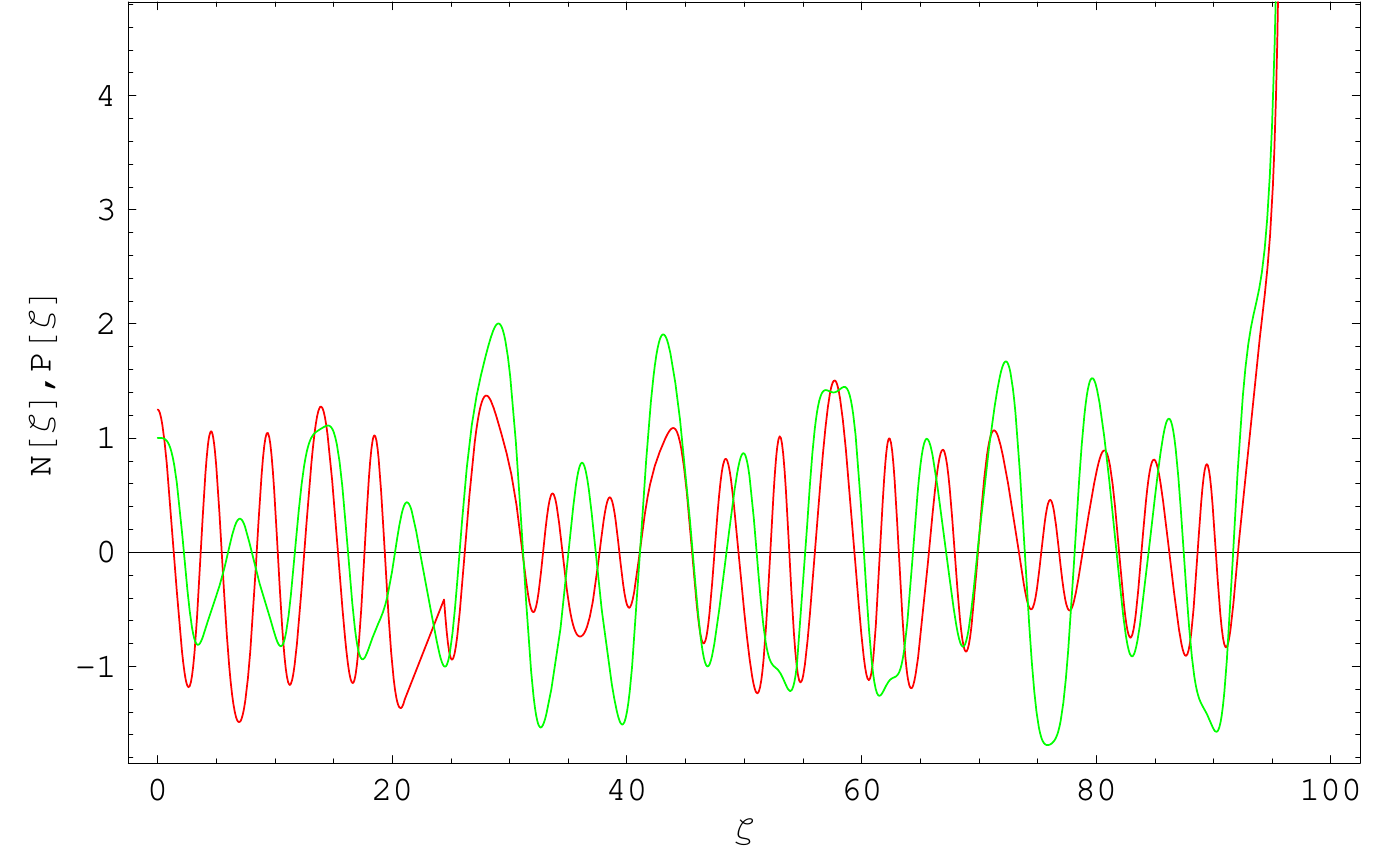}
					\end{center}
				\caption{\small{Aperiodic evolution for populations of System D, $v=-0.2$}}\label{FigD2}
\end{figure}
\begin{figure}[!ht]
					\begin{center} 
\includegraphics[width=0.5\textwidth]{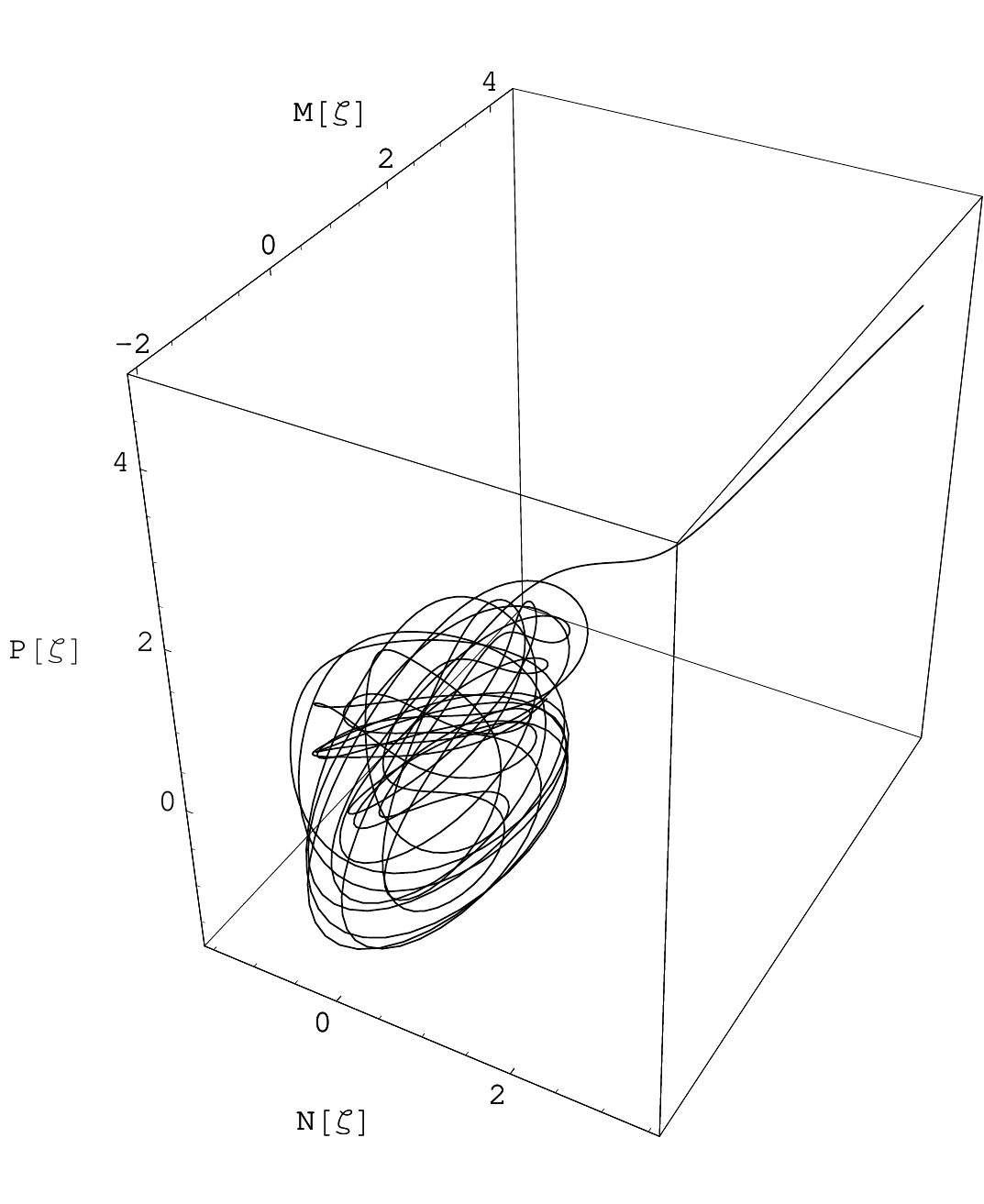}
					\end{center}
				\caption{\small{Attractor at infinity for populations of System D, $v=-0.2$}}\label{FigD1}
\end{figure}

If $v=1.2$ the volume is dissipative but the fixed point is unstable, hence the populations experience qusiperiodic behavior or bounded chaotic behavior. We will present this case next.

\subsection{System E}
Since in this case the fixed points can not be found analytically, due to a quintic algebraic equation, we will solve this case completely numerically. For the parameters set $\alpha=1.7$, $\beta=-2.1$, $\gamma=-2$, $\delta=0.6$, $k=-2$, $D_1=-1$ and $D_2=2$, the equilibrium point is $(N_0,M_0,P_0,Q_0)=(0.19,0,0.63,0)$, while the bifurcation parameter on the Hopf curve is $v_{\mp}=\mp1.8$. Here, $\bar{b}_1=-0.89$, $\bar{b}_2=-3.252$, $\bar{b}_3=2.84$, 
$\bar{b}_4=0.27 $. For these values, \eqref{routhc} is not violated but \eqref{routha} is, therefore the fixed point is unstable. To find the regimes when the fixed point changes stability we find the coefficients of the Hopf condition \eqref{hopf}, $A=0.39$, and $B=-1.027$, and we again analyze $f(v)$. Hence, the fixed point will become stable in the region $v\in (-\infty,-1.8)\cup (1.8,\infty)$ and stable for $v\in (-1.8, 1.8)$. For $v=-1.1$ the volume is dissipative and as explained above the populations will behave chaotically. Fig \ref{FigE1} shows the numerical solutions for  $N(\zeta)$ and  $P(\zeta)$ vs. the spatial variable $\zeta$. Notice the strange aperiodic dynamics. Note that unlike Fig. \ref{FigD2} the solution remains unbounded for all time. The 3D phase space plot in the $(N,M,P)$ space is shown in Fig. \ref{FigE2}. Notice that the solutions retrace the same region of phase space repeatedly, suggesting bounded chaotic dynamics on an attractor. 
\begin{figure}[!ht]
					\begin{center} 
\includegraphics[width=0.5\textwidth]{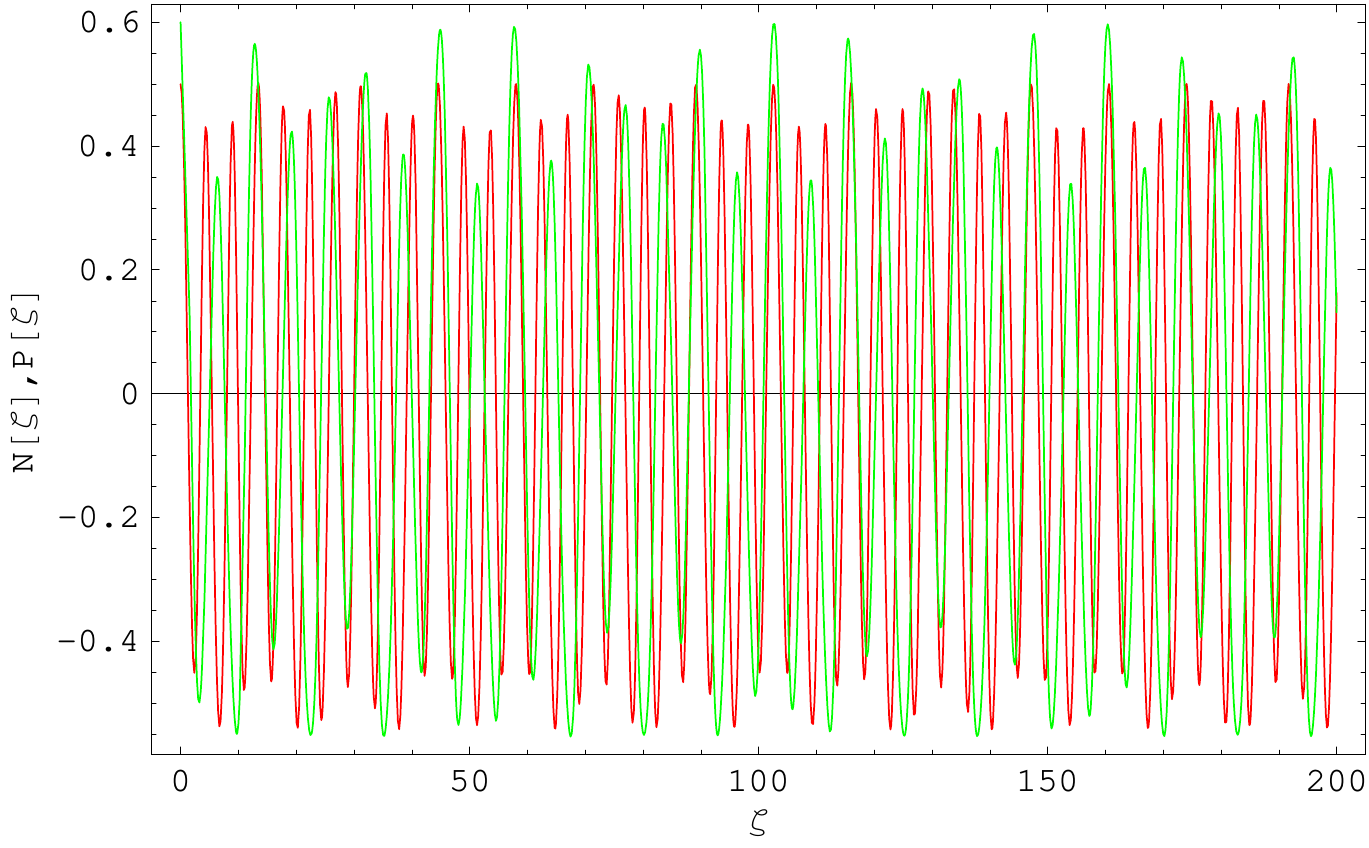}
					\end{center}
				\caption{\small{Aperiodic evolution for populations of System E, $v=-1.1$}}\label{FigE1}
\end{figure}
\begin{figure}[!ht]
					\begin{center} 
\includegraphics[width=0.5\textwidth]{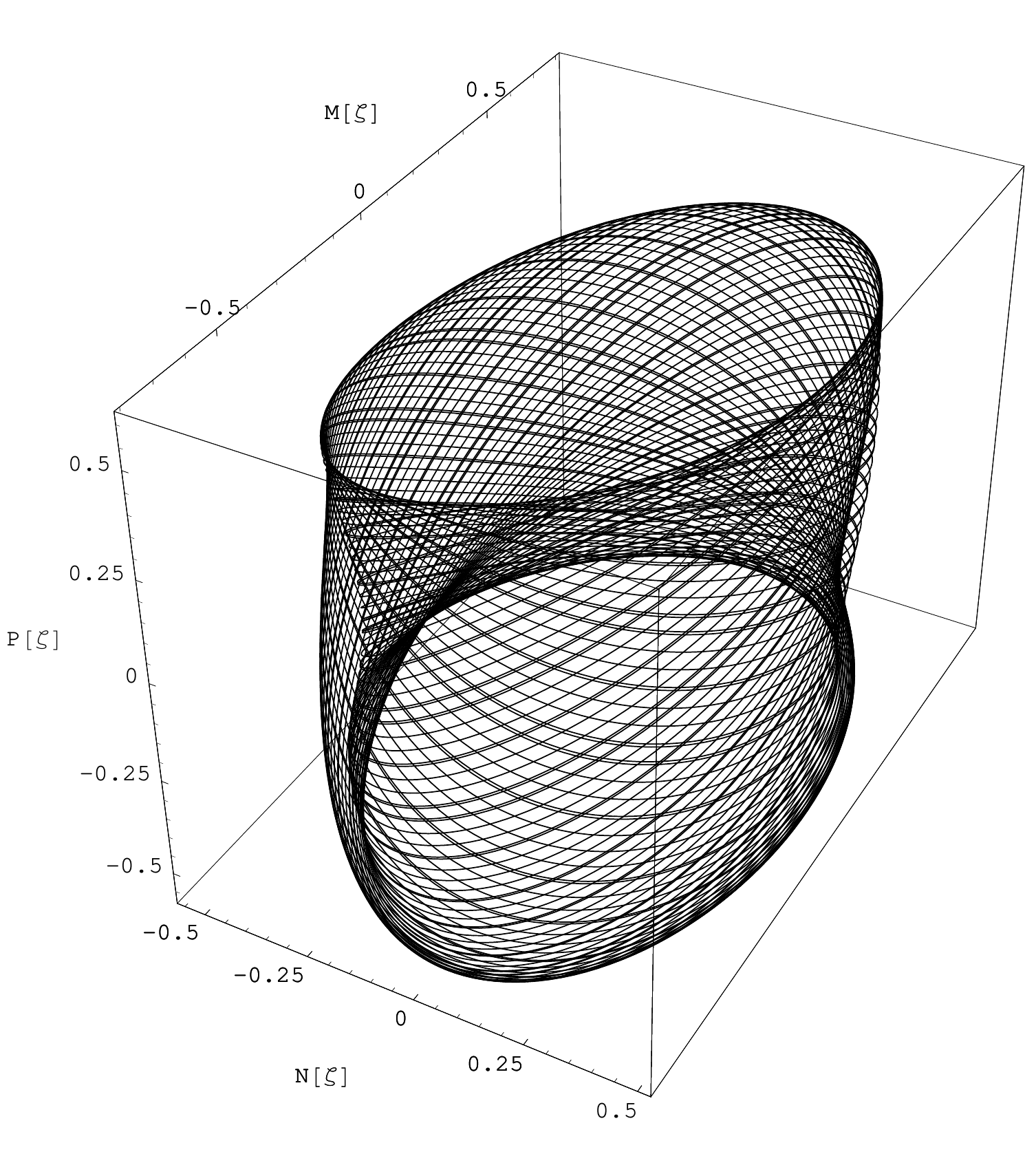}
					\end{center}
				\caption{\small{Attractor for populations of System E, $v=-1.1$}}\label{FigE2}
\end{figure}

In order to confirm this and further characterize the suspected chaotic solutions, we employ the standard numerical diagnostics \cites{Nayfeh,Grassberger} i.e., the power spectral density, the autocorrelation function, and the fractal dimensions. The power spectral density and the autocorrelation function of $N(\zeta)$ are computed using codes from "Numerical Recipes in C'' \cite{Press}, and the former is shown in Figs. \ref{FigE3}, \ref{FigE4}. The "broad'' peaks in the power spectral density plot are indicative of chaos and randomness. 
\begin{figure}[!ht]
					\begin{center} 
\includegraphics[width=0.5\textwidth]{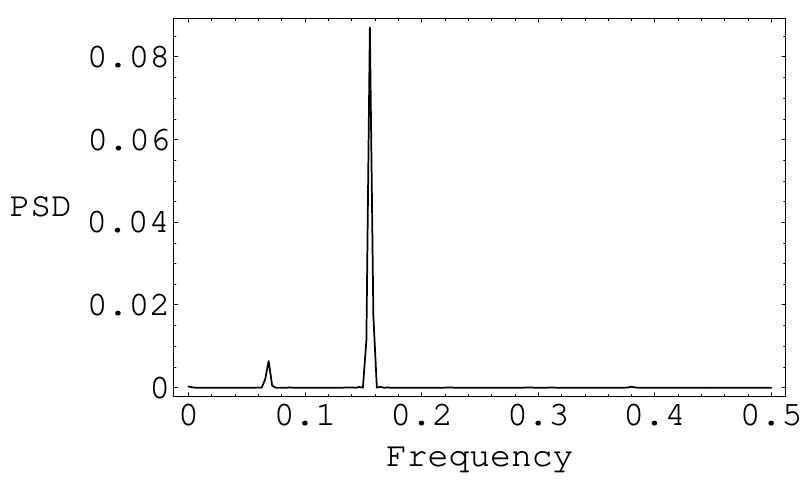}
					\end{center}
				\caption{\small{Power spectral density of System E, $v=-1.1$}}\label{FigE3}
\end{figure}
\begin{figure}[!ht]
					\begin{center} 
\includegraphics[width=0.55\textwidth]{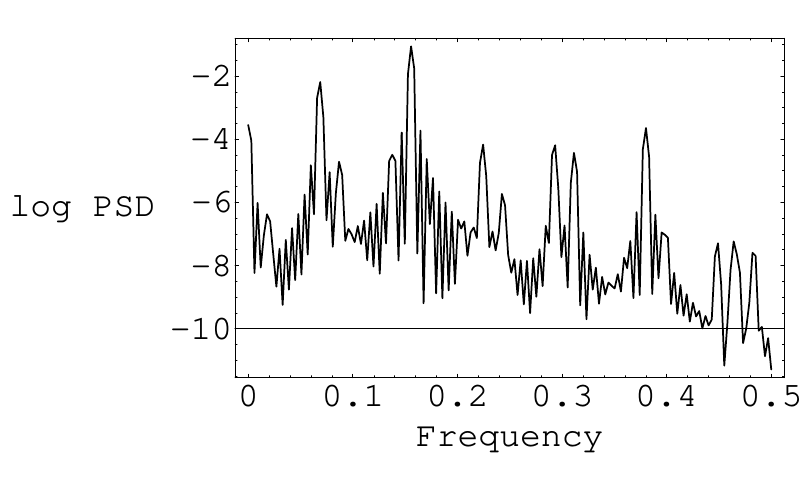}
					\end{center}
				\caption{\small{Log PSD vs. frequency of System E, $v=-1.1$}}\label{FigE4}
\end{figure}

However, we move on to a more quantitative and definitive numerical diagnostic, i.e., the fractal dimension \cite{Choudhury:2}. In order to distinguish low--dimensional (deterministic) chaos from strong randomness, one computes the dimensions as discussed below.  Of several possible alternative definitions \cites{Nayfeh,Grassberger} for the fractal dimensions, we employ the cluster fractal dimension $D$ of Termonia and Alexandrowicz which is defined by
\begin{equation}\label{cl}
n=k[R(n)]^D, n\rightarrow\infty,
\end{equation}
 where $R(n)$ is the average radius of an E--dimensional ball containing $n$ points.  Thus, $D$ is the slope of the plot of $log n$ vs. $log R(n)$. More usefully, if a scaling law \eqref{cl} exists it would show up  as a horizontal line on a plot of $d log n/d log R(n)$ vs. $log n$ with the height of the line being a measure of $D$. Fig \ref{FigE5} shows $D$ which is the height of the approximate horizontal straight line, and we may estimate the converged cluster fractal dimension to be approximately 1.6. This confirms that the System E indeed possesses bounded low dimensional (deterministic) chaotic solutions evolving on a strange attractor with dimension $D$.
\begin{figure}[!ht]
					\begin{center} 
\includegraphics[width=0.5\textwidth]{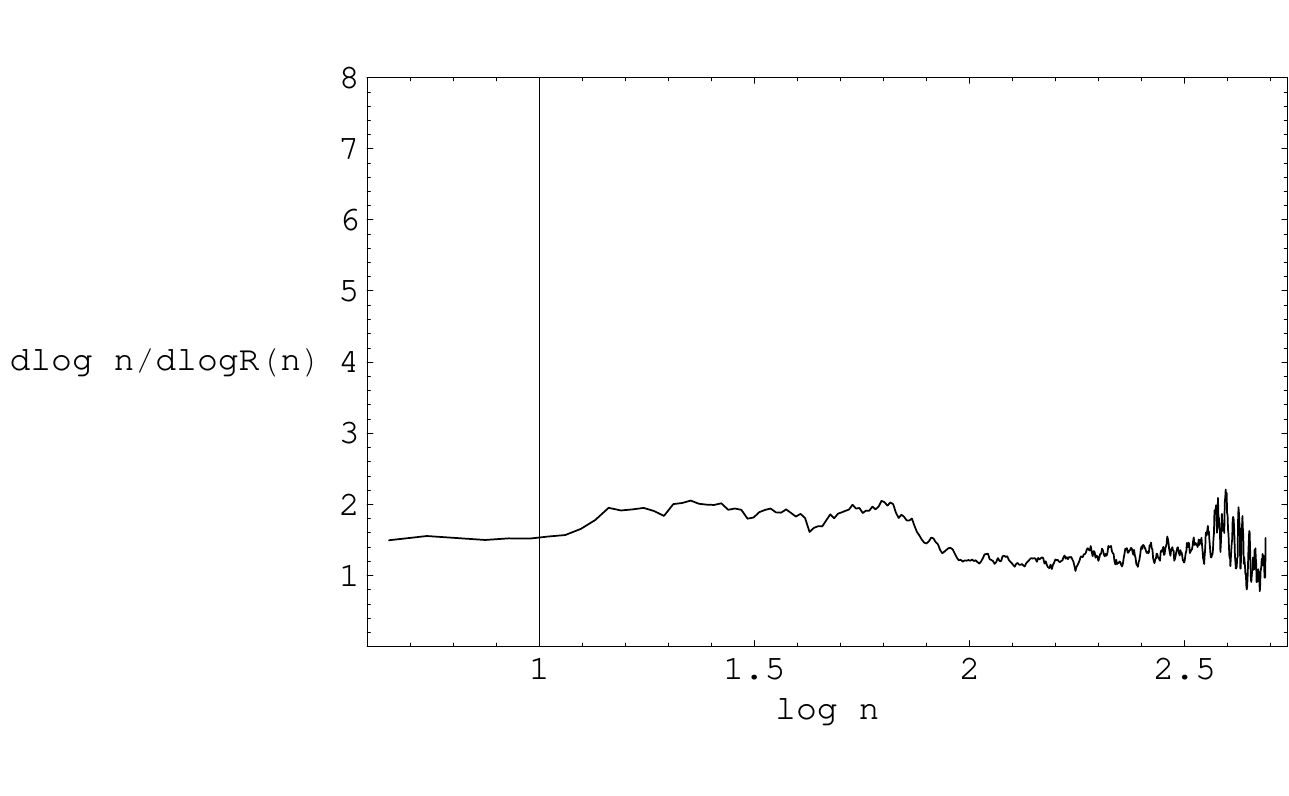}
					\end{center}
				\caption{\small{Cluster dimensions calculation of System E, $v=-1.1$}}\label{FigE5}
\end{figure}

\section{Summary and conclusions}

In this paper traveling wave pattern formation in general reaction--diffusion/predator--prey  models including diffusion in the interspecies interaction terms has been considered. For our first two specific choices of nonlinear terms, the numerical and  mathematical results presented here show either stable equilibrium behavior as in System A, or stable periodic spatial patterns as in System B. Systems C/D exhibit aperiodic spatial behavior (including a finite--time singularity using ODE terminology, or an attractor at infinite in dynamical systems parlance). For System E we also have aperiodic behavior within a diffusive volume, hence the patterns evolve chaotically on a strange attractor.

Various immediate applications of these results suggest themselves. In particular, future work will address specific reaction--diffusion systems such as the Belousov-Zhabotinsky system. Other work in progress includes pulse--train dynamics, as well as the possibility of unsteady pulse solutions in such systems, similar to those recently observed and analyzed in the famous cubic--quintic Ginzburg--Landau equation.

\bibliographystyle{unsrt}
\bibliography {bibliography}
\end{document}